\newtheorem{theorem}{Theorem}[section]
\newtheorem{corollary}[theorem]{Corollary}
\newtheorem{lemma}[theorem]{Lemma}
\newtheorem{proposition}[theorem]{Proposition}
\theoremstyle{remark}
\newtheorem{remark}{Remark}[section]
\numberwithin{equation}{section}
\newcommand{\Mcal}{\mathscr{M}}
\newcommand{\Pro}{\mathbb{P}}
\newcommand{\Z}{\mathbb{Z}}
\newcommand{\C}{\mathbb{C}}
\newcommand{\Q}{\mathbb{Q}}
\newcommand{\R}{\mathbb{R}}
\newcommand{\A}{\mathbb{A}}
\newcommand{\End}{\mathrm{End}}
\begin{document}
\title[Bivariate polynomial injections]{Bivariate polynomial injections and elliptic curves}
\author{Hector Pasten}
\address{ Departamento de Matemáticas\newline
\indent Pontificia Universidad Católica de Chile\newline
\indent Facultad de Matemáticas\newline
\indent 4860 Av. Vicuña Mackenna\newline
\indent Macul, RM, Chile}
\email[H. Pasten]{hpasten@gmail.com}%
\thanks{This research was supported by FONDECYT Regular grant 1190442.}
\date{\today}
\subjclass[2010]{Primary 11G05; Secondary 11G35, 30D35} %
\keywords{Injective, bivariate polynomial, elliptic curve, uniqueness polynomial}%

\begin{abstract} For every number field $k$, we construct an affine algebraic surface $X$ over $k$ with a Zariski dense set of $k$-rational points, and a regular function $f$ on $X$ inducing an injective map $X(k)\to k$ on $k$-rational points. In fact, given any elliptic curve $E$ of positive rank over $k$, we can take $X=V\times V$ with $V$ a suitable affine open set of $E$. The method of proof combines value distribution theory for complex holomorphic maps with results of Faltings on rational points in sub-varieties of abelian varieties.
\end{abstract}

\maketitle



\section{Introduction}

\subsection{Results} It is not known whether there is a bivariate polynomial $f\in\Q[x_1,x_2]$ inducing an injective function $\Q\times \Q\to \Q$. According to \emph{Remarque} $10$ in  \cite{Cornelissen} (which dates back to 1999) the existence of such an $f$ was first asked by Harvey Friedman, and Don Zagier suggested that $f=x_1^7+3x_2^7$ should have this property. In the direction of a positive answer to Friedman's question, we prove an injectivity result for bivariate polynomial functions on elliptic curves; in fact, our polynomial functions are very close to Zagier's suggestion.
\begin{theorem}\label{ThmIntro} Let $k$ be a number field. Let $a,b\in k$ with $4a^3+27b^2\ne 0$ and consider the smooth affine curve $C\subseteq \A^2_k$ defined over $k$ by the equation 
$$
y^2=x^3+ax+b.
$$ 
Let $\alpha,\beta, \gamma\in k$ with $\alpha,\beta\ne 0$ and $\gamma\notin \{-1,0,1\}$. Let $n\ge 9$ be a positive integer such that the only $n$-th root of unity in $k$ is $1$. Define the morphism $P:C\to \A^1_k$ by $P(x,y)=\alpha x+\beta y$ and define the morphism $f: C\times C\to \A^1_k$ by
$$
f((x_1,y_1),(x_2,y_2))=P(x_1,y_1)^n + \gamma \cdot P(x_2,y_2)^n=(\alpha x_1+\beta y_1)^n+\gamma (\alpha x_2+\beta y_2)^n.
$$ 
We have the following:
\begin{itemize}
\item[(i)] The map $C(k)\to k$ induced by $P$ on $k$-rational points is injective away from finitely many  points of $C(k)$.
\item[(ii)] Let $U\subseteq C$ be any non-empty Zariski open set defined over $k$ such that the function $U(k)\to k$ induced by $P$ is injective. Then the function $U(k)\times U(k)\to k$ induced by $f$ is injective away from finitely many $k$-rational points of $U(k)\times U(k)$. 
\end{itemize}
\end{theorem}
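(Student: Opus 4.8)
The plan is to pass to the projective closure $E$ of $C$, so that $P$ becomes a degree-$3$ morphism $E\to\Pro^1$ with a single triple pole at the origin $O$, and to translate both assertions into finiteness statements about $k$-rational points on subvarieties of the abelian varieties $E^2$ and $E^4$, to be resolved with Faltings' theorem on rational points of subvarieties of abelian varieties; value distribution theory will be used only to control the positive-dimensional pieces. For (i), I would study the coincidence curve $D\subseteq E\times E$, the Zariski closure of $\{(Q_1,Q_2):P(Q_1)=P(Q_2),\ Q_1\ne Q_2\}$. Every point of $C(k)$ at which $P$ fails to be injective is a first coordinate of a $k$-point of $D$, so it suffices that $D(k)$ be finite. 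Since an abelian variety contains no rational curves, every component of $D$ has genus $\ge1$, and by Faltings it is enough to exclude components that are translates of $1$-dimensional abelian subvarieties. Such a component is a graph $Q_2=\psi(Q_1)+c$ with $\psi\in\End(E)$ on which $P(Q_1)=P(\psi(Q_1)+c)$ holds identically; comparing degrees forces $\deg\psi=1$, and a short divisor computation (using $\alpha,\beta\ne0$ and that $\deg P=3$ is prime) shows $P$ is invariant under no nontrivial automorphism $Q\mapsto\pm Q+c$ of $E$. Hence all components have genus $\ge2$ and $D(k)$ is finite.

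For (ii), let $W\subseteq E^4$ be the closure of $\{(Q_1,Q_2,Q_3,Q_4):P(Q_1)^n+\gamma P(Q_2)^n=P(Q_3)^n+\gamma P(Q_4)^n\}$. A failure of injectivity of $f$ on $U(k)^2$ is a $k$-point of $W$ with all four coordinates in $U(k)$ and $(Q_1,Q_2)\ne(Q_3,Q_4)$; using injectivity of $P$ on $U(k)$, the hypotheses $\gamma\ne0$, and the absence of nontrivial $n$-th roots of unity in $k$, one checks that any such collision in fact satisfies $P(Q_1)\ne P(Q_3)$ and $P(Q_2)\ne P(Q_4)$. It therefore suffices to prove that the $k$-points of $W$ lying off $\{P(Q_1)=P(Q_3)\}\cup\{P(Q_2)=P(Q_4)\}$ have finite image under the projection to $(Q_1,Q_2)$. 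By Faltings, $W(k)$ is contained in finitely many translates $A+t\subseteq W$ of abelian subvarieties; the $0$-dimensional ones contribute finitely much, so the whole problem reduces to showing that every positive-dimensional $A+t\subseteq W$ is contained in $\{P(Q_1)=P(Q_3)\}\cup\{P(Q_2)=P(Q_4)\}$.

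This last step is the heart of the matter and is where value distribution theory enters. A positive-dimensional $A+t$ carries nonconstant entire curves; restricting the defining identity to one of them yields a Fermat-type functional equation $\Psi_1^n+\gamma\Psi_2^n=\Psi_3^n+\gamma\Psi_4^n$ among meromorphic functions $\Psi_i=P\circ g_i$. I would invoke a Borel--Green type theorem (a second-main-theorem estimate for such functional equations), and it is precisely here that $n\ge9$ is required, to conclude that the four terms split into proportional pairs. The ``cross'' pairing gives $P(Q_1)=\delta P(Q_4)$ on $A+t$ with $\delta^n=\gamma$, and the ``adjacent'' pairing gives $P(Q_1)=\eta P(Q_2)$ with $\eta^n=-\gamma$; repeating the part-(i) analysis for the loci $\{P(Q_1)=\delta P(Q_4)\}$ and $\{P(Q_1)=\eta P(Q_2)\}$ shows each contains a positive-dimensional translate only when the proportionality factor equals $1$, i.e. only if $\gamma=1$ respectively $\gamma=-1$; since $\gamma\notin\{-1,0,1\}$ these pairings cannot occur. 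The surviving ``parallel'' pairing gives $P(Q_1)=\zeta P(Q_3)$ with $\zeta^n=1$ on $A+t$; evaluating at a $k$-point and using that the only $n$-th root of unity in $k$ is $1$ forces $P(Q_1)=P(Q_3)$ there, so $A+t\subseteq\{P(Q_1)=P(Q_3)\}$, as needed.

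The step I expect to be the main obstacle is this analytic classification: extracting exactly the pairing structure from the value-distribution estimate with the stated bound $n\ge9$, while correctly handling the meromorphic (rather than entire) nature of the $\Psi_i$ and the poles inherited from the triple pole of $P$, and then matching each analytic degeneration with the correct algebraic coincidence locus so that the divisor argument of part (i) can be reused verbatim. A secondary but indispensable point, already visible above, is the descent of the proportionality constants to $1$ over $k$, for which the root-of-unity hypothesis on $k$ is precisely tailored.
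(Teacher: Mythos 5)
Your global architecture does match the paper's: pass to the projective closure $E$, reduce both parts to finiteness statements about $k$-points of subvarieties of $E^2$ and $E^4$ via Faltings, and control positive-dimensional translates of abelian subvarieties by restricting to entire curves, applying a Hayman/Borel--Green splitting to the Fermat-type functional equation, and closing with the root-of-unity/rationality argument. But there is a genuine gap in the step you rely on twice --- directly in (i), and again in (ii) when you ``repeat the part-(i) analysis'' for the loci $\{P(Q_1)=\delta P(Q_4)\}$ and $\{P(Q_1)=\eta P(Q_2)\}$: you assert that a one-dimensional translate of an abelian subvariety of $E\times E$ lying in a coincidence locus is a graph $Q_2=\psi(Q_1)+c$ with $\psi\in\End(E)$. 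That is not automatic. A connected one-dimensional subgroup of $E\times E$ is the image of $z\mapsto(u(z),v(z))$ for isogenies $u,v$ with $\ker u\cap\ker v=0$, and it is a graph over a factor only when $u$ or $v$ is an isomorphism. Your degree comparison yields only $\deg u=\deg v$, which does not force the graph property once $\End(E)\ne\Z$: for instance, with CM by $\Z[i]$ the subgroup $\{((2+i)z,(2-i)z)\}$ has both projections of degree $5$ and is a graph over neither factor, and correspondences coming from two distinct $2$-isogenies to the same curve give non-graph components of bidegree $(2,2)$, which your argument cannot exclude from the coincidence locus. Ruling out exactly these correspondences is the content of the paper's Theorem \ref{ThmUniqueness} (that $P=\alpha x+\beta y$ is a \emph{strong uniqueness function}), and its proof is genuinely analytic: one lifts the two projections to multiplications by $\lambda_1,\lambda_2\in\C^\times$ on the universal cover and compares the $z^{-3}$ and $z^{-2}$ Laurent coefficients of $\alpha\wp(\lambda_j z)+\tfrac{\beta}{2}\wp'(\lambda_j z)$, forcing $\lambda_1=\lambda_2$ for \emph{arbitrary} complex multiplications. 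Your value-distribution input (the splitting theorem) is aimed at the four-term equation only; the two-term ``strong uniqueness'' statement is an independent ingredient that your sketch never actually establishes. (A purely algebraic repair is conceivable --- since the only pole of $P$ is at the origin, equality of polar divisors forces the fibers of $u$ and $v$ over the pole to coincide, hence $\ker u=\ker v$, hence degree $1$ --- but you did not make this argument, and even then your automorphism check is too narrow: for $j=0,1728$ the units of $\End(E)$ give automorphisms $(x,y)\mapsto(\mu^2x,\mu^3y)$ with $\mu\in\mu_4$ or $\mu_6$, not just $Q\mapsto\pm Q+c$; here one must use both $\alpha\ne0$ and $\beta\ne0$.)

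Two secondary points, both handled by dedicated claims in the paper's proof of Theorem \ref{ThmMain} and absent from your sketch. First, before any splitting theorem applies you must dispose of the degenerate configurations: some $P\circ g_i$ may be identically zero (this collapses the four-term equation to a three-term Fermat equation, treated by Picard's theorem on the Fermat curve of degree $n\ge 9$), and the projectivized map $[P\circ g_1:\cdots:P\circ g_4]$ may be constant even when the $g_i$ are not; it is precisely in excluding the resulting proportionalities that the paper uses the hypothesis that $P$ is injective on $U(k)$ together with the root-of-unity condition (for the sub-case of two distinct constant coordinates). Second, your reduction of (ii) to positive-dimensional translates being contained in $\{P(Q_1)=P(Q_3)\}\cup\{P(Q_2)=P(Q_4)\}$ is fine and slightly different from the paper's target $D\cup\Delta_{E^2}$, but it requires these loci to be taken projectively (as pull-backs of the diagonal of $\Pro^1\times\Pro^1$) so that boundary components of the closure $W$, such as translates inside $\{O\}\times E\times\{O\}\times E$, are covered; otherwise the claim as stated is false for components at infinity.
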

The previous theorem will be obtained in Section \ref{SecApplications} an application of  Theorem \ref{ThmMain} together with Theorem \ref{ThmUniqueness} and Proposition \ref{PropInj}.

A striking consequence of Theorem \ref{ThmIntro} is the following phenomenon: \emph{For every number field $k$ there is an affine algebraic variety $X$ of dimension greater than $1$ (in fact, a surface) with a morphism $f:X\to \A^1_k$ defined over $k$ such that the $k$-rational points of $X$ are dense in $X$, and nonetheless $f$ induces an injective function $X(k)\to k$ on $k$-rational points.}  More precisely, we have the following consequences of Theorem \ref{ThmIntro}. We refer to Section \ref{SecApplications} for details.

\begin{corollary}\label{CoroNF}  Let $k$ be a number field and let $E$ be an elliptic curve over $k$ of positive Mordell-Weil rank. There is a non-empty affine open set $V\subseteq E$ defined over $k$ and a morphism $f:V\times V\to \A^1_k$ also defined over $k$ such that the $k$-rational points of the affine surface $V\times V$ are  dense, and $f$ induces an injective map $V(k)\times V(k)\to k$ on $k$-rational points. 
\end{corollary}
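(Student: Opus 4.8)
The plan is to deduce Corollary \ref{CoroNF} directly from Theorem \ref{ThmIntro}, the only real work being a bookkeeping argument that upgrades the two ``injective away from finitely many points'' conclusions into a genuine injection on a product, at the cost of shrinking the open set. First I would put $E$ in short Weierstrass form over $k$ (possible since $k$ has characteristic zero), identifying the smooth affine curve $C\colon y^2=x^3+ax+b$ with $4a^3+27b^2\ne 0$ as $E$ minus its point at infinity $O$; thus $C$ is a non-empty affine open subset of $E$ defined over $k$. Since $E$ has positive Mordell--Weil rank, $E(k)$ is infinite, so $C(k)=E(k)\setminus\{O\}$ is infinite and hence Zariski dense in the irreducible curve $C$.

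Next I would fix admissible data for Theorem \ref{ThmIntro}: take $\alpha=\beta=1$ and any $\gamma\in k\setminus\{-1,0,1\}$; the only genuine constraint is the choice of $n$. Because $k$ is a number field, its group of roots of unity $\mu(k)$ is finite, of some order $w$, and the $n$-th roots of unity in $k$ form $\mu_{\gcd(n,w)}$. Hence any $n\ge 9$ with $\gcd(n,w)=1$ (for instance any prime $\ge 9$ not dividing $w$) has the property that $1$ is the only $n$-th root of unity in $k$, and such $n$ exist in abundance. With these choices fixed, $P$ and $f$ are defined as in the theorem.

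The main point is then to convert the two exceptional-set statements into a single clean injection on a product; this is the step I expect to be the crux. By Theorem \ref{ThmIntro}(i) there is a finite set $S\subseteq C(k)$ such that $P$ is injective on $C(k)\setminus S$, so I set $U_1=C\setminus S$, a non-empty affine open subset of $E$ over $k$ on whose $k$-points $P$ is injective. Applying Theorem \ref{ThmIntro}(ii) with $U=U_1$ produces a finite set $T\subseteq U_1(k)\times U_1(k)$ such that $f$ is injective on $(U_1(k)\times U_1(k))\setminus T$. The obstacle is that $T$ need not respect the product structure, so it cannot simply be deleted while keeping a set of the form $V\times V$. The resolution is to let $S'$ be the finite union of the first and second coordinates of the points of $T$ and to put $V=U_1\setminus S'=C\setminus(S\cup S')$: for any $(q_1,q_2)\in V(k)\times V(k)$ neither coordinate lies in $S'$, so $(q_1,q_2)\notin T$, whence $V(k)\times V(k)\subseteq (U_1(k)\times U_1(k))\setminus T$ and $f$ is genuinely injective on $V(k)\times V(k)$.

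Finally I would check the remaining assertions for this $V$. As the complement of a non-empty finite set of $k$-rational points in the smooth projective curve $E$, the set $V$ is a non-empty affine open subset of $E$ defined over $k$; and $V(k)=C(k)\setminus(S\cup S')$ is cofinite in the infinite set $C(k)$, hence infinite, hence Zariski dense in the curve $V$. Since a product of two Zariski dense subsets is Zariski dense in the product variety, $(V\times V)(k)=V(k)\times V(k)$ is dense in $V\times V$. This yields all the required properties. Apart from the product-structure maneuver in the previous paragraph, the argument is a direct unwinding of Theorem \ref{ThmIntro} combined with elementary facts about roots of unity in number fields and Zariski density.
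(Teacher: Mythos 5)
Your proof is correct and takes essentially the same route as the paper's: choose a short Weierstrass model, use Theorem \ref{ThmIntro}(i) to shrink $C$ to an open set on whose $k$-points $P$ is injective, apply Theorem \ref{ThmIntro}(ii), and then delete finitely many $k$-rational points. Your coordinate-projection maneuver (removing the first and second coordinates of the exceptional set $T$ so that the remaining set has product form $V(k)\times V(k)$), the choice of $n$ coprime to the order of $\mu(k)$, and the Zariski density check are exactly the details that the paper compresses into the sentence ``deleting finitely many $k$-rational points from $U$ we get the desired open set $V$,'' so they are a welcome elaboration rather than a departure.
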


\begin{corollary}\label{CoroQ} Let $C\subseteq \A^2_\Q$ be the affine curve defined over $\Q$ by $y^2=x^3+x-1$. Let $f$ be the polynomial function on $C\times C$ defined by
$$
f((x_1,y_1),(x_2,y_2)) = (x_1+y_1)^9 +2(x_2+y_2)^9.
$$
In the real topology, the $\Q$-rational points of the algebraic surface $C\times C$ are dense in $C(\R)\times C(\R)$, and the latter is homeomorphic to $\R^2$. 

Furthermore, the function $C(\Q)\times C(\Q)\to \Q$ induced by $f$ is injective away from finitely many $\Q$-rational points of $C\times C$.
\end{corollary}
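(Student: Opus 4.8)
The plan is to derive Corollary \ref{CoroQ} from Theorem \ref{ThmIntro} applied to $a=1$, $b=-1$, $\alpha=\beta=1$, $\gamma=2$, $n=9$ and $k=\Q$, after verifying its hypotheses and supplementing it with the real and arithmetic geometry of $E\colon y^2=x^3+x-1$. First I would check the hypotheses: $4a^3+27b^2=31\ne 0$, so $C$ is smooth; $\alpha,\beta\ne 0$ and $\gamma=2\notin\{-1,0,1\}$; $n=9\ge 9$; and because $9$ is odd, $-1$ is not a ninth root of unity, so the only ninth root of unity in $\Q$ is $1$. Here $P(x,y)=x+y$ and $f$ is the stated polynomial, so Theorem \ref{ThmIntro} is available.

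For the topological claim, the cubic $x^3+x-1$ has derivative $3x^2+1>0$ and hence exactly one real root; equivalently the discriminant $-16\cdot 31$ is negative, so $E(\R)$ is connected and isomorphic as a real Lie group to $S^1$. Deleting the rational point at infinity $O$ gives $C(\R)=E(\R)\setminus\{O\}\cong\R$, whence $C(\R)\times C(\R)\cong\R^2$. For density I would exhibit $(1,1)\in C(\Q)$ and compute $2\cdot(1,1)=(2,-3)$; since $(2,-3)$ is an integral point with $y^2=9$ and $9\nmid 4a^3+27b^2=31$, the Nagell--Lutz theorem shows it is non-torsion, so $(1,1)$ has infinite order and $E(\Q)$ is infinite. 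The closure of the infinite subgroup $E(\Q)$ inside the compact connected group $E(\R)\cong S^1$ is an infinite closed subgroup, hence all of $E(\R)$; removing $O$ shows $C(\Q)$ is dense in $C(\R)$, and passing to products gives density in $C(\R)\times C(\R)$.

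The crux is the injectivity of $f$ on all of $C(\Q)\times C(\Q)$. The subtle point is that Theorem \ref{ThmIntro}(i) only provides injectivity of $P$ away from a finite set, whereas for the corollary as stated I need $P$ to be injective on all of $C(\Q)$: if a line $x+y=s_0$ met $C$ in two distinct rational points $P_1,P_2$, then $f(P_1,Q)=f(P_2,Q)$ for every $Q\in C(\Q)$, giving an infinite non-injectivity locus and contradicting the assertion. So the real work is to prove $P=x+y$ injective on $C(\Q)$, in fact on $C(\R)$. I would substitute $y=s-x$ into $y^2=x^3+x-1$: given one intersection point $(x,y)$ of the line $x+y=s$ with $C$, the $x$-coordinates of the other two intersections are the roots of a quadratic whose discriminant simplifies, using $y^2=x^3+x-1$ and $x\ne 0$ on $C$, to $\Delta=-3(x+1)^2-8y$. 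It then suffices to show $\Delta<0$ for every real point of $C$, that is, that $C(\R)$ lies strictly above the parabola $y=-\tfrac{3}{8}(x+1)^2$. On the upper branch $y\ge 0$ this is immediate, and on the lower branch it reduces to $64(x^3+x-1)<9(x+1)^4$, i.e.\ to the positivity of the quartic $9x^4-28x^3+54x^2-28x+73$, which I would confirm holds for all real $x$ by checking that its unique critical point yields a positive minimum. Hence the slope $-1$ chord through any real point of $C$ meets $C(\R)$ only there, so $P$ is injective on $C(\R)\supseteq C(\Q)$.

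With $P$ injective on $C(\Q)$ I may take $U=C$ in Theorem \ref{ThmIntro}(ii), which yields at once that $f$ induces a map $C(\Q)\times C(\Q)\to\Q$ that is injective away from finitely many rational points, finishing the proof. I expect the only genuine obstacle to be the discriminant inequality of the third paragraph; all remaining ingredients are either verifications of the theorem's hypotheses or standard facts about real loci of elliptic curves and closures of Mordell--Weil groups.
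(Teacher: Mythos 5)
Your proof is correct and follows the same overall route as the paper's: show that $P=x+y$ is injective on all of $C(\R)$ (hence on $C(\Q)$), apply Theorem \ref{ThmIntro}(ii) with $U=C$, and obtain density from the infinitude of $E(\Q)$ inside the connected compact group $E(\R)$. The differences lie in how the two auxiliary facts are established, and in both cases your versions are more self-contained. For density, the paper cites the Cremona database (curve 248c1, with $E(\Q)\cong\Z$ generated by $(1,1)$); you need only infinitude, which you get by hand by doubling $(1,1)$ to $(2,-3)$ and invoking strong Nagell--Lutz ($9\nmid 31$), and along the way you also verify explicitly that $C(\R)\times C(\R)\cong\R^2$, a claim of the corollary that the paper's proof leaves implicit. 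For injectivity of $P$ on $C(\R)$, the paper argues via the mean value theorem that a collision would force a tangent of slope $-1$, and then asserts that every non-vertical tangent has slope of absolute value greater than $2.708$; that numerical assertion is actually false as stated, since the tangent at $(1,1)\in C(\R)$ has slope $(3x^2+1)/(2y)=2$ there (the true minimum of $|y'|$ on $C(\R)$ is roughly $1.92$), although the weaker bound the argument needs, namely $|y'|>1$, does hold, being equivalent to the positivity of $(3x^2+1)^2-4(x^3+x-1)=9x^4-4x^3+6x^2-4x+5$. Your alternative is purely algebraic and exact: the residual quadratic for a slope $-1$ line through $(x,y)\in C(\R)$ indeed has discriminant $-3(x+1)^2-8y$, and its negativity on $C(\R)$ reduces, as you say, to the positivity of $9x^4-28x^3+54x^2-28x+73$, which holds (its derivative has a unique real zero near $x\approx 0.33$, where the quartic is about $69$). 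So your treatment of this step avoids the numerics entirely and is, if anything, tighter than the paper's; the two proofs are otherwise the same reduction.
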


Corollary \ref{CoroQ} shows, in particular, that there is no topological obstruction to a possible positive answer to Friedman's question.

An important tool in our approach is the concept of \emph{strong uniqueness function} for holomorphic maps from $\C$ to complex elliptic curves, which we introduce and study in Section \ref{SecUniqueness}. Then we apply them in Section \ref{SecArithmetic} to explicitly compute the Zariski closure of rational points in certain varieties associated to injectivity problems. 

Since we work with elliptic curves, the relevant varieties in our arguments appear as sub-varieties of abelian varieties. Results of Faltings allow us to study the rational points of these varieties \emph{provided} that we can explicitly compute the (translates of) positive dimensional abelian varieties contained in them. It is at this point where strong uniqueness functions are used, together with results of Hayman on holomorphic maps to Fermat varieties.


\subsection{Previous work} As far as we know, our results give the first unconditional progress on Friedman's question over $\Q$ and number fields. Besides this, there are two other results conditional on standard conjectures in Diophantine Geometry: 

In 1999,  Cornelissen \cite{Cornelissen} observed that the $4$-terms $abc$-conjecture \cite{BB} implies that polynomials of the form $x^n+3y^n$ induce injections $\Q\times \Q\to \Q$ if $n$ is odd and large enough. The basic idea is that failure of injectivity gives rise to a non-degenerate solution of the $4$-term Fermat equation $a^n+3b^n=c^n+3d^n$ where the $4$-terms $abc$-conjecture can be applied. Cornelissen also proved an unconditional analogue over function fields, where the $4$-terms $abc$-conjecture can be replaced by a theorem of Mason. 

In 2010, Poonen \cite{Poonen} proved that the Bombieri-Lang conjecture on rational points of surfaces of general type also implies a positive answer to Friedman's question.

Let us briefly recall the main ideas in Poonen's work \cite{Poonen}. He observed that when $f(x,y)\in\Q[x,y]$ is homogeneous,  it induces an injective map $\Q\times \Q\to \Q$ if and only if the $\Q$-rational points of the projective surface $Z=\{f(x_0,x_1)=f(x_2,x_3)\}\subseteq \Pro^3_\Q$ lie in the line $L=\{x_0=x_2, x_1=x_3\}\subseteq Z$. For general $f$ with $\deg(f)\ge 5$, the Bombieri-Lang conjecture for surfaces implies that the $\Q$-rational points of the surface $Z$ are algebraically degenerate, but this is not enough for injectivity as the Zariski closure of the rational points in $Z$ might contain other components besides $L$. While it is unclear how to exactly compute the Zariski closure of the rational points even under the Bombieri-Lang conjecture, Poonen managed to produce a suitably ramified cover of $Z$ to get rid of those other components (if any), leading to a new polynomial $f_0(x,y)\in \Q[x,y]$ of larger degree which has the desired injectivity property conditional on the Bombieri-Lang conjecture. 

Our approach of course owes some ideas to the work of Cornelissen (e.g. the use of $4$-terms Fermat equations as part of the construction) and Poonen (e.g. to formulate the injectivity problem as a problem about rational points in varieties). However, there are a number of crucial differences. For instance, in our context we need to consider higher dimensional varieties ---not just surfaces--- and we need to explicitly determine the Zariski closure of the relevant rational points. Our method for computing this Zariski closure pertains to the theory of value distribution of complex holomorphic maps.


\subsection{Additional motivation} Besides arithmetic interest, there are other motivations in the literature for studying bivariate polynomial injections.

In 1895, Cantor \cite{Cantor} showed that the polynomial
$$
f_1(x,y)=\frac{1}{2}(x+y)(x+y+1)+y
$$
defines a bijection $\Z_{\ge 0}\times \Z_{\ge 0}\to \Z_{\ge 0}$ and used this fact to prove that $\Z_{\ge 0}^2$ is countable. The polynomial $f_2(x,y)=f_1(y,x)$ has the same property, and its is an old open problem whether $f_1$ and $f_2$ are the only two polynomials inducing a bijection $\Z_{\ge 0}\times \Z_{\ge 0}\to \Z_{\ge 0}$. In 1923, Fueter and Polya \cite{FuPo} proved that this is the case among quadratic polynomials. In 1978, the result was extended to polynomials of degree at most $4$ by Lew and Rosenberg \cite{LeRo1, LeRo2}.

Bivariate injections $\Z\times \Z\to \Z$ are known as \emph{storing functions} in computability theory. Cornelissen \cite{Cornelissen} extended this notion to a model-theoretic context by introducing the concept of positive existential storing; his study of Friedman's question was developed in this setting.

More recently, the topic of bivariate polynomial injections has received attention for cryptographic purposes. Boneh and Corrigan-Gibbs \cite{BoCG} developed a new commitment scheme (among other applications) motivated by the conjecture that some bivariate polynomial $f\in\Q[x_1,x_2]$ such as Zagier's polynomial $x_1^7+3x_2^7$ defines an injective map $\Q^2\to \Q$. Attacks on this commitment scheme have been studied by Zhang and Wang \cite{ZhWa}.



\section{Strong uniqueness functions on elliptic curves} \label{SecUniqueness}

Let $\Mcal$ be the field of (possibly transcendental) complex meromorphic functions on $\C$. We recall that a polynomial $P(x)\in\C[x]$ is said to be a \emph{uniqueness polynomial} if the equation $P(f_1)=P(f_2)$ with $f_1,f_2\in\Mcal$ non-constant implies $f_1=f_2$. On the other hand, a polynomial $P(x)\in\C[x]$ is said to be a \emph{strong uniqueness polynomial} if the equation $P(f_1)=cP(f_2)$ with $f_1,f_2\in\Mcal$ non-constant and $c\in\C^\times$ implies $f_1=f_2$ (in particular, it implies $c=1$). Uniqueness polynomials and strong uniqueness polynomials are a classic topic in value distribution theory of holomorphic maps, and there is abundant literature on this subject, see for instance \cite{AnWaWo04, AvZa, Fujimoto, HuLiYa}.

Non-constant elements of $\Mcal$ are the same as holomorphic maps from $\C$ to $\Pro^1$, and by Picard's theorem the only other algebraic curves admitting non-constant holomorphic maps from $\C$ are elliptic curves. Let us introduce a notion of (strong) uniqueness function for the latter setting; such functions will naturally arise in our study of arithmetic injectivity problems.

Let $E$ be a complex elliptic curve. A rational function $P\in K(E)$ is a \emph{uniqueness function on $E$} if the equation $P(f_1)=P(f_2)$ with $f_1,f_2:\C\to E$ non-constant holomorphic maps implies $f_1=f_2$. On the other hand, a rational function $P\in K(E)$ is a \emph{strong uniqueness function on $E$} if the equation $P(f_1)=cP(f_2)$ with $f_1,f_2:\C\to E$ non-constant holomorphic maps and $c\in\C^\times$ implies  $f_1=f_2$ (in particular, it implies $c=1$).  It turns out that every complex elliptic curve admits strong uniqueness functions of a very simple kind.

\begin{theorem}\label{ThmUniqueness} Let $E$ be an elliptic curve over $\C$, choose a Weierstrass equation $y^2=x^3+Ax+B$ for $E$ and define the rational functions $x,y\in K(E)$ as corresponding coordinate projections. Let $\alpha,\beta\in \C$ be non-zero, and consider the non-constant rational function $P=\alpha x+\beta y\in K(E)$. Then $P$ is a strong uniqueness function on $E$.
\end{theorem}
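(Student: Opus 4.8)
The plan is to translate the analytic statement into a question about curves inside the abelian surface $E\times E$, and then to reduce, via a pole computation, to the trivial observation that $1,x,y$ are linearly independent in $K(E)$. Suppose $f_1,f_2\colon\C\to E$ are non-constant holomorphic maps and $c\in\C^\times$ satisfy $P(f_1)=cP(f_2)$. Since $y$ has a triple pole and $x$ a double pole at the origin $O$, the function $P=\alpha x+\beta y$ (with $\beta\ne0$) has a single pole, of order $3$, at $O$; in particular $\deg P=3$ and $P^{-1}(\infty)=3[O]$. Consider the holomorphic map $g=(f_1,f_2)\colon\C\to E\times E$. The hypothesis says exactly that $g$ takes values in the curve
\[
Z_c=\{(p,q)\in E\times E:\ P(p)=cP(q)\},
\]
which is a genuine (one-dimensional) divisor in the surface $E\times E$. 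It suffices to prove that the Zariski closure $W$ of $g(\C)$ is contained in the diagonal $\Delta=\{(p,p)\}$ and that $c=1$, for then $f_1(z)=f_2(z)$ for every $z$.

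The first step is to determine $W$. Being the Zariski closure of the image of the irreducible $\C$, the set $W$ is irreducible; since $g$ is non-constant and $W\subseteq Z_c$, it is a one-dimensional irreducible component of $Z_c$. Here I invoke value distribution theory: a curve of geometric genus $\ge2$ is hyperbolic and admits no non-constant holomorphic image of $\C$, while an abelian variety contains no rational curves; hence $W$ has geometric genus $1$ and is therefore a translate of an elliptic curve in $E\times E$ (equivalently, one may cite the theorem that the Zariski closure of a holomorphic image of $\C$ in an abelian variety is a translate of an abelian subvariety). Because $f_1,f_2$ are non-constant, both projections of $W$ to $E$ are non-constant, hence surjective; thus I may write
\[
W=\{(a+\phi(u),\,b+\psi(u)):u\in E\}
\]
for some $(a,b)\in E\times E$ and non-zero isogenies $\phi,\psi\colon E\to E$.

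The inclusion $W\subseteq Z_c$ now reads as the identity $P(a+\phi(u))=cP(b+\psi(u))$ in $K(E)$. I would compare pole divisors: since isogenies are étale and $P$ has its only pole, of order $3$, at $O$, the left side has poles exactly along the coset $\phi^{-1}(-a)$ and the right side along $\psi^{-1}(-b)$, each with multiplicity $3$. Equality forces $\phi^{-1}(-a)=\psi^{-1}(-b)$ as sets, whence $\ker\phi=\ker\psi$ and therefore $\psi=\theta\circ\phi$ for a unique automorphism $\theta$ of $E$. Substituting and running $v=\phi(u)$ over $E$ gives $P(a+v)=cP(b+\theta(v))$ for all $v$; matching poles once more yields $b=\theta(a)$, and since $\theta$ is a homomorphism the relation collapses to
\[
P(w)=c\,P(\theta(w))\qquad\text{for all }w\in E.
\]

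Finally I would exploit the shape of $P$. Every automorphism of the Weierstrass model is of the form $\theta\colon(x,y)\mapsto(\mu^2x,\mu^3y)$ for a root of unity $\mu$, so the last identity becomes $\alpha x+\beta y=c(\alpha\mu^2x+\beta\mu^3y)$. The functions $1,x,y$ have pole orders $0,2,3$ at $O$ and so are $\C$-linearly independent in $K(E)$; comparing coefficients gives $\alpha=c\alpha\mu^2$ and $\beta=c\beta\mu^3$. As $\alpha,\beta\ne0$ this forces $c\mu^2=c\mu^3=1$, hence $\mu=1$ and $c=1$. Thus $\theta=\mathrm{id}$, $b=a$ and $\psi=\phi$, so $W\subseteq\Delta$ and $f_1=f_2$, as desired. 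I expect the only genuinely non-elementary input to be the value-distribution fact used to pin down $W$ as a translate of an elliptic curve; once that is in hand the argument is purely formal, and the step that truly uses the hypothesis $\beta\ne0$ is the final comparison of coefficients---indeed for $\beta=0$ the involution $(x,y)\mapsto(x,-y)$ provides a counterexample.
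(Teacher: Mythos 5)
Your proposal is correct, and its first half coincides with the paper's: both form the map $(f_1,f_2)\colon\C\to E\times E$, note that its image is algebraically degenerate, and invoke the value-distribution fact (the paper's Lemma \ref{LemmaHoloE}(b), in essence Bloch--Ochiai for an abelian surface) to conclude that the Zariski closure $W$ is a translate of an elliptic curve subgroup, parametrized through an isogeny from $E$. The endgame, however, is genuinely different. The paper first shows the translate is trivial, i.e.\ $(0_E,0_E)\in W$, by observing that the unique pole of $P$ forces $f_1^{-1}(0_E)=f_2^{-1}(0_E)\neq\emptyset$; it then writes $f_j=u_j\circ h$ with $u_j\in\End(E)$, lifts everything to the universal cover, and compares the $z^{-3}$ and $z^{-2}$ Laurent coefficients of $\wp$ and $\wp'$ to force the two endomorphisms to agree --- an analytic computation. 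You instead keep the unknown translate $(a,b)$, and argue purely algebraically on $E$: equality of pole divisors gives $\ker\phi=\ker\psi$, the factorization theorem for isogenies gives $\psi=\theta\circ\phi$ with $\theta\in\Aut(E,O)$, a second pole comparison gives $b=\theta(a)$, and then the classification $\theta\colon(x,y)\mapsto(\mu^2x,\mu^3y)$ together with linear independence of $x,y$ in $K(E)$ forces $\mu=1$, $c=1$. Your route avoids the Weierstrass uniformization and Laurent expansions entirely (and never needs the curve to pass through the origin), at the cost of invoking the isogeny factorization theorem and the structure of automorphism groups of elliptic curves; it also makes very transparent exactly where $\alpha\neq0$ and $\beta\neq0$ enter, namely in the final coefficient comparison, consistent with the paper's remarks that both conditions are necessary. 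The paper's analytic computation is more self-contained but less structural. Two small points you should flesh out in a final write-up: the parametrization $W=\{(a+\phi(u),b+\psi(u)):u\in E\}$ requires choosing an isogeny $E\to Y_0$ (e.g.\ the dual of a projection), just as the paper fixes $v\colon E\to Y$; and the identity $P(a+\phi(u))=cP(b+\psi(u))$ in $K(E)$ needs the one-line remark that two rational functions agreeing on the Zariski-dense set coming from $g(\C)$ (minus poles) are equal.
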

\begin{remark} The conditions $\alpha\ne 0$ and $\beta\ne 0$ are necessary: For any complex elliptic curve $E$ and any non-constant holomorphic map $f_1: \C\to E$ we have
$$
x(f_1)=x([-1]\circ f_1) \quad \mbox{ and }\quad  y(f_1)=-1\cdot y([-1]\circ f_1).
$$
\end{remark}

\begin{remark} The conditions $\alpha\ne 0$ and $\beta\ne 0$ are necessary even if we work with uniqueness functions. This is clear for $\beta=0$. For $\alpha=0$ we can choose the elliptic curve $E$ with affine equation $y^2=x^3+1$. We consider the automorphism $u$ of $E$ defined by $(x,y)\mapsto (\epsilon x, y)$ with $\epsilon$ a primitive cubic root of $1$. Let us take any non-constant holomorphic map $f_1:\C\to E$ and define $f_2=u\circ f_1$. Then  $y(f_1)= y(u\circ f_1)$.
\end{remark}

\begin{remark} If $Q$ is a strong uniqueness polynomial for $\Mcal$ and $P$ is a uniqueness function on an elliptic curve $E$, then $Q\circ P$ is a strong uniqueness function on $E$. Thus, Theorem \ref{ThmUniqueness} together with standard results in the theory of strong uniqueness functions, allow one to construct strong uniqueness functions on $E$ of arbitrarily large degree.
\end{remark}

\begin{remark} After learning about this work, Michael Zieve managed to prove (private communication) that if $E$ is given in Weierstrass form as in Theorem \ref{ThmUniqueness} with coordinates $x$ and $y$, then ``most'' polynomials $P(x,y)\in \C[x,y]$ give strong uniqueness functions on $E$. Thus, strong uniqueness functions are abundant. The linear case given in Theorem \ref{ThmUniqueness} suffices for our purposes, although a more general description of strong uniqueness functions might be useful for other diophantine problems.
\end{remark}

In preparation for the proof of Theorem \ref{ThmUniqueness}, we record here the following simple lemma.
\begin{lemma}\label{LemmaHoloE} Let $E$ be an elliptic curve over $\C$.
\begin{itemize}
\item[(a)] Let $\theta :\C\to E$ be a non-constant holomorphic map. Then $\theta$ is surjective.
\item[(b)] Let $\psi:\C\to E\times E$ be a non-constant holomorphic map with algebraically degenerate image. Then the image of $\psi$ is exactly the translate of an elliptic curve subgroup of $E\times E$.
\end{itemize}
\end{lemma}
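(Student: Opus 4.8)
For part (a) the plan is to pass to the universal cover. Writing $E=\C/\Lambda$ with projection $\pi\colon\C\to E$, the simple connectivity of $\C$ lets me lift $\theta$ to an entire function $\tilde\theta\colon\C\to\C$ with $\theta=\pi\circ\tilde\theta$; it is non-constant since $\theta$ is. Picard's little theorem then gives that $\tilde\theta(\C)$ omits at most one point of $\C$. Since each fiber $\pi^{-1}(e)$ is an infinite coset of $\Lambda$, it cannot be disjoint from a subset of $\C$ missing at most one point, so every $e\in E$ is attained and $\theta(\C)=E$.

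For part (b) let $W$ be the Zariski closure of $\psi(\C)$ in $E\times E$. Algebraic degeneracy means $W$ is proper, irreducibility of $\C$ forces $W$ irreducible, and non-constancy forces $\dim W\ge 1$, so $W$ is an irreducible curve on the abelian surface. The overall strategy is to recognize $W$ as a translate of an abelian subvariety and then to upgrade ``Zariski dense'' to ``surjective'' by feeding back part (a). First I would lift $\psi$ through the normalization $\nu\colon\tilde W\to W$: as $\psi(\C)$ is not contained in the finite singular locus of $W$, the map factors as $\psi=\nu\circ\hat\psi$ with $\hat\psi\colon\C\to\tilde W$ holomorphic (the lift is defined off a discrete set and extends by Riemann removability, $\tilde W$ being projective). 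Now $\tilde W$ is a smooth projective irreducible curve carrying the non-constant map $\hat\psi$, so by Picard it has genus $0$ or $1$; genus $0$ is excluded because there is no non-constant morphism $\Pro^1\to E\times E$, leaving $\tilde W$ an elliptic curve.

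With an origin fixed on $\tilde W$, the composite $\iota\colon\tilde W\to W\hookrightarrow E\times E$ is a morphism of abelian varieties, so the rigidity theorem writes it as $\iota=\tau_c\circ\rho$ with $\rho$ a homomorphism and $\tau_c$ translation by $c=\iota(0)$. Then $A_0=\rho(\tilde W)$ is a one-dimensional abelian subvariety and $W=\iota(\tilde W)=c+A_0$ is the translate of an elliptic curve subgroup. Finally, applying part (a) to $\hat\psi$ shows $\hat\psi$ is surjective, whence $\psi(\C)=\nu(\hat\psi(\C))=\nu(\tilde W)=W=c+A_0$, giving the image \emph{exactly}.

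I expect the substantive step to be the identification of the degenerate image with a translate of an abelian subvariety: the Picard arguments and the surjectivity bootstrap are routine, but one must justify the normalization lift with care and correctly invoke the rigidity theorem to convert the a priori arbitrary morphism $\iota$ into a homomorphism followed by a translation. Once $W$ is known to be $c+A_0$, part (a) does the rest automatically.
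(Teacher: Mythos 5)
Your proposal is correct and takes essentially the same route as the paper: part (a) is the identical lift-plus-Picard argument, and in part (b) both proofs identify the Zariski closure as a curve whose normalization is forced to be elliptic (genus $0$ excluded by the absence of non-constant maps from $\Pro^1$, genus $\ge 2$ by Picard) and then recognize it as a translate of an elliptic curve subgroup before invoking part (a) for surjectivity. Your use of the rigidity theorem on $\iota\colon\tilde W\to E\times E$ is just a repackaging of the paper's statement that non-constant maps between elliptic curves are translates of isogenies, applied via the two projections; your handling of the normalization lift is somewhat more careful than the paper's terse ``by part (a), $\psi$ is surjective onto $Y$.''
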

\begin{proof}  For (a), let us choose a uniformization $q:\C\to E$ and a holomorphic lift  $\tilde{\theta}:\C\to \C$ satisfying $q\circ \tilde{\theta} =\theta$. The entire function $\tilde{\theta}$ has at most one exceptional value in $\C$ by Picard's theorem, hence $\theta$ is surjective.

Item (b) is a special case of the literature around Bloch's conjecture, but this case admits a direct proof: The Zariski closure of the image of $\psi$ is an algebraic curve $Y$ in $E\times E$ of geometric genus $g=1$ ($g=0$ is excluded by the Riemann-Hurwitz theorem, and $g\ge 2$ is excluded by Picard's theorem). The only non-constant maps between elliptic curves are translates of isogenies, so, applying the two obvious projections $E\times E\to E$ we conclude that $Y$ is the translate of an elliptic curve subgroup of $E\times E$. By part (a), $\psi$ is surjective onto $Y$. 
\end{proof}
 For a complex elliptic curve $E$ we denote its neutral element by $0_E$.

\begin{proof}[Proof of Theorem \ref{ThmUniqueness}] 
Let $c\in\C^\times$ and let $f_1,f_2:\C\to E$ be non-constant holomorphic maps satisfying $P\circ f_1=c\cdot P\circ f_2$. Observe that both $P\circ f_1, P\circ f_2\in\Mcal$ are non-constant. We will prove that $f_1=f_2$, which implies $c=1$.

Let us define the holomorphic map 
$$
\phi: \C\to E\times E, \quad \phi(z)=(f_1(z),f_2(z)).
$$ 
We observe that $\phi$ does not have Zariski dense image in $E\times E$. This is because $P:E\to\Pro^1$ induces a finite morphism $P\times P: E\times E\to \Pro^1\times \Pro^1$, and the condition $P\circ f_1=c\cdot P\circ f_2$ implies that the image of $\phi$ is contained in the pull-back of  $\{([s:ct],[s:t]): [s:t]\in\Pro^1\}\subseteq \Pro^1\times \Pro^1$ under $P\times P$.

Therefore $\phi$ has algebraically degenerate image in $E\times E$. By part (b) of Lemma \ref{LemmaHoloE}, $\phi$ surjects onto an algebraic curve $Y\subseteq E\times E$ which is the translate of an elliptic curve subgroup in $E\times E$. 

We claim that that $(0_E,0_E)\in Y$, i.e. that $Y$ is in fact an elliptic curve subgroup of $E\times E$. This is because the only pole of $P=\alpha x+\beta y$ occurs at $0_E$, which is in the image of both $f_1$ and $f_2$ by part (a) of Lemma \ref{LemmaHoloE}. Since $P\circ f_1=c\cdot P\circ f_2$ with $c\in\C^\times$, it follows that 
$$
f_1^{-1}(0_E)=f_1^{-1}(P^{-1}(\infty))=(P\circ f_1)^{-1}(\infty)=(P\circ f_2)^{-1}(\infty)=f_2^{-1}(P^{-1}(\infty))=f_2^{-1}(0_E)
$$ 
which is non-empty. This proves $(0_E,0_E)\in Y$.

Therefore, the image of $\phi=(f_1,f_2):\C\to E\times E$ is an elliptic curve subgroup $Y\subseteq E\times E$, and by considering the projections $p_1,p_2:E\times E\to E$ we see that $Y$ is isogenous to $E$.

Fix an isogeny $v:E\to Y$ and let $u_j=p_j\circ v\in \End(E)$ for $j=1,2$. Since $v$ is \'etale, there is a holomorphic map $h:\C\to E$ that lifts $\phi$ via $v$ in the sense that $\phi=v\circ h$. Since $\phi=(f_1,f_2)$, we observe that $f_j=p_j\circ \phi=p_j\circ v\circ h=u_j\circ h$ for $j=1,2$.  It suffices to show $u_1=u_2$.

For a suitable lattice $\Lambda\subseteq \C$ (depending on our choice of Weierstrass equation for $E$), the associated Weierstrass function $\wp\in\Mcal$ induces an explicit uniformization of $w:\C\to E$, as it satisfies
$$
\left(\frac{1}{2} \wp'(z)\right)^2 = \wp(z)^3+A\wp(z)+B.
$$
The unformization $w:\C\to E$ is determined by $\wp$ in the sense that 
\begin{equation}\label{Eqwp}
x\circ w=\wp \quad \mbox{ and}\quad y\circ w=\frac{1}{2}\wp'.
\end{equation}
There are non-zero complex numbers $\lambda_1,\lambda_2\in\C^\times$ lifting the endomorphisms $u_1$ and $u_2$ via $w$, so that $(u_j\circ w)(z)=w(\lambda_j\cdot z)$. It suffices to prove $\lambda_1= \lambda_2$, as this will show $u_1=u_2$, hence, $f_1=f_2$.

Also, we recall that the Laurent expansions  of $\wp(z)$ and $\wp'(z)$ near $z=0$ are
$$
\wp(z)=\frac{1}{z^2} +\sum_{j\ge 1} c_j z^{2j}\quad \mbox{ and }\quad
\wp'(z)=\frac{-2}{z^3}  +\sum_{j\ge 1} 2jc_j z^{2j-1} 
$$
for suitable complex numbers $c_j\in\C$. Let us choose a lift $\tilde{h}:\C\to \C$ of the holomorphic map $h:\C\to E$ via the uniformization $w$, so that $h=w\circ \tilde{h}$. Furthermore, since $h:\C\to E$ is surjective (cf. item (a) in Lemma \ref{LemmaHoloE}) we can choose the holomorphic lift $\tilde{h}:\C\to \C$ in such a way that $0$ is in its image.

Recalling that $P=\alpha x+\beta y$ and that $f_j=u_j\circ h$, the condition $P\circ f_1=c\cdot P\circ f_2$ becomes
$$
\alpha\cdot  x\circ u_1\circ h + \beta\cdot  y\circ u_1\circ h = c\alpha\cdot  x\circ u_2\circ h + c\beta\cdot  y\circ u_2\circ h . 
$$
From the relation $(u_j\circ h)(z) = (u_j\circ w\circ \tilde{h})(z) = w(\lambda_j\cdot \tilde{h}(z))$ together with \eqref{Eqwp}, we deduce
$$
\alpha\cdot \wp (\lambda_1\cdot \tilde{h}(z)) + \frac{\beta}{2}\cdot \wp'(\lambda_1\cdot \tilde{h}(z))= c\alpha\cdot \wp (\lambda_2\cdot \tilde{h}(z)) + \frac{c\beta}{2}\cdot \wp'(\lambda_2\cdot \tilde{h}(z)).
$$
Since $0$ is in the image of $\tilde{h}:\C\to\C$, we have that the image of $\tilde{h}$ contains a neighborhood of $0\in\C$. From the previous relation we deduce that the following holds for the complex variable $z$ in a neighborhood of $0$ :
$$
\alpha\cdot \wp (\lambda_1\cdot z) + \frac{\beta}{2}\cdot \wp'(\lambda_1\cdot z)= c\alpha\cdot \wp (\lambda_2\cdot z) + \frac{c\beta}{2}\cdot \wp'(\lambda_2\cdot z).
$$
Considering the coefficients of $z^{-3}$ and $z^{-2}$ in the Laurent expansion near $z=0$, we deduce
$$
\beta \lambda_1^{-3} =c\beta \lambda_2^{-3}\quad \mbox{ and }\quad \alpha\lambda_1^{-2}=c\alpha\lambda_2^{-2}.
$$
Recall that $c,\alpha,\beta,\lambda_1,\lambda_2$ are non-zero complex numbers. We finally deduce 
$$
\lambda_2=\frac{\beta}{\alpha}\cdot\frac{c\alpha\lambda_2^{-2}}{c\beta\lambda_2^{-3}}=\frac{\beta}{\alpha}\cdot\frac{\alpha\lambda_1^{-2}}{\beta\lambda_1^{-3}}=\lambda_1.
$$
\end{proof}


\section{Arithmetic results} \label{SecArithmetic}

If $X$ and $Y$ are algebraic varieties over a field $k$ we write $X\times Y$ for $X\times_k Y$, and the diagonal in $X\times X$ is denoted by $\Delta_X$. 

The following proposition is an application of Faltings theorem for rational points on curves of genus at least $2$ and the notion of uniqueness function. The proof can be useful to illustrate the ideas in the proof of our main injectivity results for products of elliptic curves (cf. Theorem \ref{ThmMain}).
\begin{proposition} \label{PropInj} Let $k$ be a number field. Let $E$ be an elliptic curve over $k$ and let $P\in k(E)$ be a uniqueness function defined over $k$. Let $V\subseteq E$ be the locus where $P$ is regular. The function $V(k)\to k$ induced by $P$ is injective away from finitely many points of $V(k)$.
\end{proposition}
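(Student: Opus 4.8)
The plan is to reduce the statement to Faltings' theorem by studying the incidence curve in $E\times E$ cut out by the equation $P(p)=P(q)$. Viewing $P$ as a finite morphism $P:E\to\Pro^1$, I would set $W=E\times_{\Pro^1}E$ (with its reduced structure), a closed curve in the surface $E\times E$. Two distinct points $p,q\in V(k)$ with $P(p)=P(q)$ produce a $k$-rational point $(p,q)$ of $W$ lying off the diagonal $\Delta_E$; conversely, the off-diagonal $k$-points of $W$ whose two coordinates both lie in $V$ record exactly the failures of injectivity of $P$ on $V(k)$. So it suffices to bound the off-diagonal $k$-points of $W$.

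First I would decompose $W$ into its geometrically irreducible components over $\bar k$. The diagonal $\Delta_E$ is always one of them. The claim to establish is that $\Delta_E$ is the \emph{only} component of geometric genus $\le 1$; every other component $Y$ has genus $\ge 2$. Granting this, Faltings' theorem gives $\#Y(k')<\infty$ for each non-diagonal component $Y$ (defined over a finite extension $k'/k$), and since $W$ has finitely many components, the total number of off-diagonal $k$-points of $W$ is finite. Removing the finite set of points of $V(k)$ occurring as a coordinate of such a pair then yields the injectivity statement.

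The heart of the argument is the genus classification, and this is where the uniqueness function hypothesis enters, precisely mirroring the analytic input in Lemma \ref{LemmaHoloE}. Let $Y$ be a component other than $\Delta_E$. Since $P$ is non-constant, neither projection $E\times E\to E$ can be constant on $Y$ (a vertical or horizontal line $\{p_0\}\times E$ or $E\times\{q_0\}$ cannot satisfy $P(p)=P(q)$ identically), so both projections restrict to non-constant maps $Y\to E$. By Riemann--Hurwitz (equivalently Picard's theorem, since there is no non-constant map $\Pro^1\to E$) the genus of $Y$ cannot be $0$. Suppose then that $Y$ has geometric genus $1$. Composing the uniformization of the normalization $\tilde Y$ with the normalization map $\tilde Y\to Y\hookrightarrow E\times E$ produces a non-constant holomorphic map $\psi=(f_1,f_2):\C\to E\times E$ with image in $Y\subseteq W$; because the projections are non-constant, both $f_1$ and $f_2$ are non-constant, and the defining equation of $W$ gives $P\circ f_1=P\circ f_2$. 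Since $P$ is a uniqueness function on $E$, this forces $f_1=f_2$, so the image of $\psi$ lies in $\Delta_E$; as $Y$ is an irreducible curve surjecting onto this image, $Y=\Delta_E$, contrary to assumption.

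The main obstacle I anticipate is the bookkeeping around fields of definition and the scheme structure of $W$: one must ensure the geometric components genuinely fall into the clean genus dichotomy (ruling out, in particular, non-reduced or embedded phenomena), and that Faltings is applied over a field over which each component is defined, with the finitely many Galois conjugates accounting for all $k$-points. The geometric genus computation itself is routine once the two projections are known to be non-constant, so the only conceptually delicate point is the passage from a genus-$1$ component to a holomorphic map $\C\to E\times E$ to which the uniqueness hypothesis can be applied.
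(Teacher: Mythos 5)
Your proposal is correct and follows essentially the same route as the paper: both form the curve in $E\times E$ cut out by $P(p)=P(q)$, show via a holomorphic map $\C\to E\times E$ and the uniqueness-function hypothesis that $\Delta_E$ is the only component of geometric genus $\le 1$, and then apply Faltings' theorem to the remaining components of genus $\ge 2$. The only differences are cosmetic (you take the reduced fiber product $E\times_{\Pro^1}E$ rather than the closure of $(P\times P)^*\Delta_{\A^1_k}$, and you spell out the field-of-definition bookkeeping that the paper leaves implicit).
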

\begin{proof}
Using the morphism $P\times P: V^2\to \A^2_k$, we let $Z\subseteq V\times V$ be defined by $(P\times P)^*\Delta_{\A^1_k}$. Let $\overline{Z}\subseteq E^2$ be the Zariski closure of $Z$ and observe that $\overline{Z}-Z$ consists of finitely many points. We note that $\overline{Z}$ is a finite union of curves of geometric genus at least $1$, since it is a divisor on $E^2$, and the diagonal $\Delta_E$ is one of these curves.

We claim that the only irreducible component of $\overline{Z}$ of geometric genus $1$ is $\Delta_E$. In fact, let $Y$ be any irreducible component of $\overline{Z}$ with geometric genus $1$, then there is a non-constant holomorphic map $h:\C\to E^2$ whose image is contained in $Y$. We write $h=(h_1,h_2)$ with $h_j:\C\to E$ holomorphic and at least one of them non-constant. Let us observe that $P\circ h_1=P\circ h_2$ because $Y$ is a component of $\overline{Z}$ and $\overline{Z}-Z$ consists of finitely many points. It follows that both $h_1$ and $h_2$ are non-constant, and since $P$ is a uniqueness function we deduce that $h_1=h_2$. Hence $Y=\Delta_E$.

Finally, note that we must prove that only finitely many pairs $(x,y)\in (V^2-\Delta_E)(k)$ satisfy $P(x)=P(y)$, which is the same as proving that only finitely $k$-rational points of $Z$ lie outside $\Delta_E$. The result follows by Faltings theorem for curves \cite{Faltings1} and our previous genus computation.
\end{proof}
The following is our main tool for proving the injectivity results stated in the introduction.
\begin{theorem}\label{ThmMain} Let $k$ be a number field. Let $E$ be an elliptic curve over $k$ and let $P\in k(E)$ be a strong uniqueness function defined over $k$. Let $U\subseteq E$ be a non-empty Zariski open subset of $E$ defined over $k$ satisfying that $P$ is regular on $U$ and that the function $U(k)\to k$ induced by $P$ is injective. Let $n\ge 9$ be a positive integer such that the only $n$-th root of unity in $k$ is $1$. Let $\gamma\in k$ be different from $-1$, $0$, and $1$. Let $f:U\times U\to \A^1_k$ be the morphism defined over $k$ by
$$
f(q_1,q_2)= P(q_1)^n+\gamma\cdot  P(q_2)^n.
$$
The function $U(k)\times U(k)\to k$ induced by $f$ is injective away from finitely many $k$-rational points of $U\times U$.
\end{theorem}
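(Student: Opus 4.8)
The plan is to translate the injectivity statement into a question about rational points on a subvariety of the abelian variety $E^4$, resolve it with Faltings' theorem (Mordell--Lang for number fields), and identify the relevant translated abelian subvarieties using value distribution theory. Concretely, inside $U^4=(U\times U)^2\subseteq E^4$ I would consider the locus $W$ where $f(q_1,q_2)=f(q_3,q_4)$, that is, $P(q_1)^n+\gamma P(q_2)^n=P(q_3)^n+\gamma P(q_4)^n$, and let $\overline W\subseteq E^4$ be its Zariski closure. Writing $\Delta'=\{q_1=q_3,\ q_2=q_4\}\subseteq E^4$ for the ``trivial collision'' locus (an abelian subvariety, isomorphic to $E\times E$ and clearly contained in $\overline W$), a genuine collision $f(a)=f(b)$ with $a\ne b$ is exactly a point of $W(k)\setminus\Delta'$. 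By Faltings' theorem, $\overline{W(k)}$ is a finite union of translates of abelian subvarieties of $E^4$, each contained in $\overline W$; moreover each \emph{positive-dimensional} such translate $T$ must contain infinitely many points of $W(k)\subseteq U^4(k)$, so in particular any constant coordinate projection of $T$ takes a value lying in $U(k)$. Granting the Key Claim that every positive-dimensional $T$ satisfies $T\subseteq\Delta'$, the points of $W(k)\setminus\Delta'$ are confined to the finitely many zero-dimensional translates and hence form a finite set; discarding the finitely many points of $U\times U$ occurring as coordinates of these then yields injectivity, proving the theorem.

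It remains to prove the Key Claim. Given a positive-dimensional translated abelian subvariety $T\subseteq\overline W$, I would choose a holomorphic map $h=(h_1,h_2,h_3,h_4):\C\to T$ with Zariski-dense image (available since any positive-dimensional complex torus carries a dense one-parameter image). Because $T\subseteq\overline W$, the defining equation pulls back to the meromorphic identity
$$
g_1^n+\gamma g_2^n = g_3^n+\gamma g_4^n,\qquad g_j:=P\circ h_j\in\Mcal,
$$
where $h_j$ is non-constant precisely when $T$ dominates the $j$-th factor. The crucial analytic input is a Fermat-type structure theorem in the spirit of Hayman's work on holomorphic maps to Fermat varieties: for $n\ge 9$ (here $9=(4-1)^2$ is the relevant threshold for four terms) such a relation forces a partition of $\{1,2,3,4\}$ into blocks within which the $g_j$ are pairwise proportional and over which the corresponding weighted subsum vanishes. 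Since $T$ is positive-dimensional, some $g_j$ is non-constant, so the block containing it has a non-constant member.

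I would then rule out every partition except the ``diagonal'' one. A block pairing a first-type index with a second-type index (namely $\{1,2\},\{3,4\}$ or $\{1,4\},\{2,3\}$) produces a proportionality $P\circ h_i=\mu\,P\circ h_j$ with $\mu^n\in\{-\gamma,\gamma\}$; when such a block has a non-constant member, both members are non-constant, and the strong uniqueness of $P$ (Theorem \ref{ThmUniqueness}) forces $\mu=1$, hence $\gamma\in\{-1,1\}$, contradicting the hypothesis on $\gamma$. The single block $\{1,2,3,4\}$ makes all $g_j$ proportional to one non-constant function, and strong uniqueness then yields $h_1=h_3$ and $h_2=h_4$. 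Finally, for the diagonal partition $\{1,3\},\{2,4\}$ the relations are $g_1=\zeta g_3$ and $g_2=\zeta' g_4$ with $\zeta^n=\zeta'^n=1$: if the members are non-constant strong uniqueness again gives $h_1=h_3$ and $h_2=h_4$, whereas if a pair is constant then $\zeta$ (resp.\ $\zeta'$) lies in $k$, is an $n$-th root of unity, hence equals $1$ by hypothesis, and injectivity of $P$ on $U(k)$ forces equality of the constant points. In every surviving case the image of $h$ lies in $\Delta'$, and by density $T\subseteq\Delta'$, proving the Key Claim.

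The main obstacle I anticipate is twofold. First, one must pin down and correctly invoke the value-distribution structure theorem producing the block decomposition at the exact threshold $n\ge 9$; this is the genuinely analytic ingredient and has to be applied to meromorphic (not merely entire) data carrying the non-trivial weights $1,\gamma,-1,-\gamma$. Second, the ensuing case analysis is delicate precisely because strong uniqueness applies only to non-constant maps: the constant-coordinate sub-cases are exactly where the two arithmetic hypotheses (triviality of the $n$-th roots of unity in $k$ and injectivity of $P$ on $U(k)$) must enter, and one must also dispose of degenerate branches in which some $g_j$ vanishes identically (a coordinate sitting at a zero of $P$), which I would treat by recursing on the relation with fewer terms. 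Reconciling these holomorphic and arithmetic inputs is the subtle part of the argument.
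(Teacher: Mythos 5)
Your proposal is correct and follows essentially the same route as the paper: reduce to rational points on the collision locus in $E^4$, apply Faltings' theorem, parametrize any positive-dimensional translate by a holomorphic map with dense image, and then combine Hayman's theorem for the $4$-term Fermat equation at the threshold $n\ge 9$ (with Picard's theorem on Fermat curves for the identically-vanishing branches) with strong uniqueness, the root-of-unity hypothesis, and injectivity of $P$ on $U(k)$ to force containment in the diagonal. The differences are purely organizational: the paper applies Faltings to the closure $\overline{Z}$ and disposes of the boundary $D=\overline{Z}-Z$ explicitly, and instead of your partition-into-blocks case analysis it first proves two claims (that $P\circ h_2$ is neither identically zero nor a nonzero multiple of $P\circ h_0$) before invoking Hayman's line theorem --- which is precisely the ``recursion on fewer terms'' you defer to, carried out.
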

\begin{remark} The function $P$ required by Theorem \ref{ThmMain} exists for every choice of $E$; our Theorem \ref{ThmUniqueness} gives an assortment of them. Furthermore, Proposition \ref{PropInj} shows that the open set $U$ required by Theorem \ref{ThmMain} always exists. Nevertheless, in concrete cases one might explicitly construct such an open set $U$ by other means such as a local argument, cf. the proof of Corollary \ref{CoroQ} in Section \ref{SecApplications} for instance.
\end{remark}
\begin{proof}[Proof of Theorem \ref{ThmMain}] Let $Z\subseteq U^4$ be the pull-back of $\Delta_{\A^1_k}$ by the morphism $f\times f: U^4\to \A^2_k$. Since $f$ is a finite map, $Z$ is a divisor in $U^4$ defined over $k$. Also, we have $\Delta_{U^2}\subseteq Z$. 

It suffices to show that all but finitely many $k$-rational points of $Z$ are contained in $\Delta_{U^2}$. 

Let $\overline{Z}$ be the Zariski closure of $Z$ in $E^4$. Let $D=\overline{Z}-Z$. Note that the Zariski closure of $\Delta_{U^2}\subseteq U^4 =U^2\times U^2$ in $E^4$ is precisely $\Delta_{E^2}$. It suffices to show that $\overline{Z}-(D\cup \Delta_{E^2})$ contains at most finitely many $k$-rational points.

By Faltings theorem on sub-varieties of abelian varieties \cite{Faltings2, Faltings3}, the (finitely many) irreducible components of the Zariski closure of $\overline{Z}(k)$ are translates of abelian sub-varieties of $E^4$ defined over $k$. Let $Y$ be any of these irreducible components with strictly positive dimension. It suffices to show that $Y\subseteq D\cup \Delta_{E^2}$, and we will prove this by contradiction.

For the sake of contradiction, let us suppose that $Y$ is not contained in $D\cup \Delta_{E^2}$. Since $Y$ is the translate of a positive-dimensional abelian sub-variety of $E^4$, there is a non-constant holomorphic map $h:\C\to Y$ with Zariski dense image (this can be seen by realizing $Y$ as a quotient of $\C^d$ by some lattice, and considering a holomorphic map to $\C^d$ with Zariski dense image). As $Y$ is not contained in $D\cup \Delta_{E^4}$, we can write $h=(h_0,h_1,h_2,h_3)$ with $h_j:\C\to E$ holomorphic for each $j$, satisfying:
\begin{itemize} 
\item[(i)] Not all the $h_j$ are constant. 
\item[(ii)] $(h_0,h_1)\ne (h_2,h_3)$ as holomorphic maps $\C\to E^2$, since $Y$ is not included in $\Delta_{E^2}$.
\item[(iii)] The compositions $P\circ h_j$ are well-defined elements of $\Mcal$, since $Y$ is not contained in $D$.
\item[(iv)] The following equation holds in $\Mcal$:
$$
(P\circ h_0)^n +\gamma\cdot (P\circ h_1)^n = (P\circ h_2)^n +\gamma \cdot (P\circ h_3)^n
$$
since $Y\subseteq \overline{Z}$ and $Z=(f\times f)^*\Delta_{\A^1_k}$.
\end{itemize}
By (i) and symmetry of the conditions (possibly replacing $\gamma$ by $1/\gamma$), we may assume that $h_0$ is non-constant. Hence $P\circ h_0$ is non-constant.

We claim that $P\circ h_2$ is not the zero constant. For otherwise, (iv) would give
\begin{equation}\label{EqFcurve}
(P\circ h_0)^n +\gamma\cdot (P\circ h_1)^n -\gamma \cdot (P\circ h_3)^n=0
\end{equation}
so that the holomorphic map $[P\circ h_0:P\circ h_1:P\circ h_3]:\C\to \Pro^2$ would have image contained in a Fermat curve of degree $n\ge 9$. Such a curve has genus at least $28$, so our holomorphic map is constant by Picard's theorem. This means that there are complex numbers $c_1,c_3\in \C$ with $P\circ h_1=c_1\cdot P\circ h_0$ and $P\circ h_3=c_3\cdot P\circ h_0$. Furthermore, not both $c_1$ and $c_3$ are equal to $0$, because of \eqref{EqFcurve} and the fact that $h_0$ is non-constant. We consider the two cases:
\begin{itemize}
\item If $c_1\ne 0$ then $h_1$ is non-constant. We get $h_1=h_0$ and $c_1=1$ because $P$ is a strong uniqueness function. From \eqref{EqFcurve} we get $(1+\gamma)(P\circ h_0)^n=\gamma \cdot (P\circ h_3)^n$ where $\gamma\ne 0$ and $1+\gamma\ne 0$. Since $P$ is a strong uniqueness function and $h_0$ is non-constant, we deduce $h_3=h_0$. This gives $1+\gamma=\gamma$, impossible.
\item If $c_3\ne 0$ then $h_3$ is non-constant. We get $h_3=h_0$ and $c_3=1$ because $P$ is a strong uniqueness function. From \eqref{EqFcurve} we get $(1-\gamma)(P\circ h_0)^n=-\gamma \cdot (P\circ h_1)^n$ and similarly we deduce $h_1=h_0$. This gives $1-\gamma=-\gamma$, impossible.
\end{itemize}
This contradiction proves that $P\circ h_2$ is not the zero constant.

We now claim that there is no $c\in \C^\times$ for which $P\circ h_2=c\cdot P\circ h_0$. For the sake of contradiction, suppose there is such a $c$. Then $h_2$ is non-constant because $h_0$ is non-constant, and since $P$ is a strong uniqueness function we deduce $h_0=h_2$ and $c=1$. Therefore $P\circ h_2= P\circ h_0$. By item (iv) and the condition $\gamma\ne 0$ we deduce $(P\circ h_1)^n =(P\circ h_3)^n$.  If $h_1$ or $h_3$ is non-constant, so is the other and we deduce that they are equal because $P$ is a strong uniqueness function; this would contradict item (ii) because we already know $h_0=h_2$. Therefore both $h_1$ and $h_3$ are constant. If $h_1,h_3$ are the same constant function, then we again get a contradiction with (ii), so they are different constant functions given by two different points $q_1,q_3\in E$ respectively. We observe that $q_1,q_3\in U(k)$ because $Y(k)$ is Zariski dense in $Y$, which is not contained in $D$. Since $P$ is injective on $U(k)$, our hypothesis on $n$ (the only $n$-th root of unity in $k$ is $1$) shows that $(P\circ h_1)^n=P(q_1)^n\ne P(q_3)^n=(P\circ h_3)^n$, and we obtain a contradiction. This proves that there is no $c\in \C^\times$ for which $P\circ h_2=c\cdot P\circ h_0$.

Let us define the holomorphic map $H:\C\to \Pro^3$ by 
$$
H= [P\circ h_0: P\circ h_1: P\circ h_2:P\circ h_3]
$$ 
and observe that it is non-constant because $P\circ h_0$ is non-constant while $P\circ h_2$ is not of the form $c\cdot P\circ h_0$ for any $c\in \C$ (the cases $c=0$ and $c\ne 0$ are covered by the two previous claims). Furthermore, we observe that the image of $H$ is contained in the Fermat surface $F\subseteq \Pro^3$ defined by
\begin{equation}\label{EqFermat}
x_0^n+\gamma\cdot x_1^n-x_2^n-\gamma\cdot x_3^n=0.
\end{equation}
We recall that $\gamma\ne 0$ and $n\ge 9$. By Hayman's theorem \cite{Hayman} the image of $H$ must be contained in one of the ``obvious'' lines of $F$ determined by a vanishing $2$-terms sub-sum of \eqref{EqFermat}. Since $P\circ h_2$ is not of the form $c\cdot P\circ h_0$ for any $c\in \C$, this only leaves the following possibilities:
\begin{itemize}
\item $P\circ h_0 = \lambda \cdot P\circ h_1$ with $\lambda^n=-\gamma$. This gives that $h_1$ is non-constant, and since $P$ is a strong uniqueness function we would get $h_1=h_0$ and $\lambda=1$. This is not possible since $\gamma\ne -1$.
\item $P\circ h_0 = \lambda \cdot P\circ h_3$ with $\lambda^n=\gamma$. This gives that $h_3$ is non-constant, and since $P$ is a strong uniqueness function we would get $h_3=h_0$ and $\lambda=1$. This is not possible since $\gamma\ne 1$.
\end{itemize}
This is a contradiction. Therefore $Y$ must be contained in $D\cup \Delta_{E^2}$.
\end{proof}



\section{Applications}\label{SecApplications}

We can now use the results of Sections \ref{SecUniqueness} and \ref{SecArithmetic} to prove the results stated in the introduction.

\begin{proof}[Proof of Theorem \ref{ThmIntro}] Let $E$  be the projective closure of $C$, then $E$ is an elliptic curve over $k$. Note that $P\in k(E)$ is a strong uniqueness function defined over $k$, by Theorem \ref{ThmUniqueness}. Furthermore, $P$ is regular on $C\subseteq E$, and we see that item (i) of Theorem \ref{ThmIntro} follows from Proposition \ref{PropInj}.

Item (ii) of Theorem \ref{ThmIntro} follows from Theorem \ref{ThmMain}.
\end{proof}

\begin{proof}[Proof of Corollary \ref{CoroNF}] We choose a short Weierstrass equation for $E$ and then we apply Theorem \ref{ThmIntro} to the corresponding affine curve $C\subseteq E$. By item (i) of Theorem \ref{ThmIntro} we can shrink $C$ to get an open set $U$ as required by item (ii). Deleting finitely many $k$-rational points from $U$ we get the desired open set $V$.
\end{proof}

\begin{proof}[Proof of Corollary \ref{CoroQ}] The elliptic curve $E$ of affine equation $y^2=x^3+x-1$ has Cremona label 248c1. The group $E(\Q)$ is isomorphic to $\Z$, generated by the point $(1,1)$. In the real topology, we note that $E(\R)$ is connected, hence $E(\Q)$ is dense in $E(\R)$  (its closure in the one-dimensional real Lie group $E(\R)$ is open). 

Consider the morphism $P:C\to \A^1_\Q$ given by $P(x,y)=x+y$. We claim that $P$ induces an injective map on real points $C(\R)\to\R$. If not, we would have two different points $q_1,q_2\in C(\R)$ such that the line through them has slope $-1$. Hence, for some $q\in C(\R)$ (between $q_1$ and $q_2$) the line tangent to $C(\R)$ at $q$ has slope $-1$. However, it is a simple computation to check that the slope of any non-vertical tangent of $C(\R)$ has absolute value larger than $2.708$. In particular, no tangent to $C(\R)$ can have slope $-1$, which proves injectivity on real points.

 Since $P$ induces an injective function $C(\R)\to \R$, it also induces an injective function $C(\Q)\to \Q$. Thus, we can apply item (ii) of Theorem \ref{ThmIntro} with $U=C$.
\end{proof}


\section{Acknowledgments}

This research was supported by FONDECYT Regular grant 1190442.

The results in this work were motivated by discussions at the PUC number theory seminar and I deeply thank the attendants. I also thank Gunther Cornelissen and Michael Zieve for comments on an earlier version of this work.


\end{document}